\providecommand{\U}[1]{\protect \rule{.1in}{.1in}}
\newtheorem{theorem}{Theorem}[section]
\newtheorem{corollary}[theorem]{Corollary}
\newtheorem{definition}[theorem]{Definition}
\newtheorem{lemma}[theorem]{Lemma}
\newtheorem{remark}[theorem]{Remark}
\newenvironment{proof}[1][Proof]{\noindent \textbf{#1.} }{\  $\Box$}
\numberwithin{equation}{section}
\begin{document}

\title{\textbf{Normal Approximation by Stein's Method under Sublinear Expectations }}
\author{ Yongsheng Song\thanks{%
Academy of Mathematics and Systems Science, CAS, Beijing, China,
yssong@amss.ac.cn. Research  supported  by NCMIS;
Key Project of NSF (No. 11231005); Key Lab of Random Complex
Structures and Data Science, CAS (No. 2008DP173182) and Key Research Program of Frontier Sciences, CAS.}  }

\date{\today}
\maketitle

\begin{abstract}Peng (2008)(\cite{P08b}) proved the Central Limit Theorem under a sublinear expectation:

\textit{Let $(X_i)_{i\ge 1}$ be a sequence of i.i.d random variables  under a sublinear expectation $\hat{\mathbf{E}}$ with $\hat{\mathbf{E}}[X_1]=\hat{\mathbf{E}}[-X_1]=0$ and $\hat{\mathbf{E}}[|X_1|^3]<\infty$. Setting $W_n:=\frac{X_1+\cdots+X_n}{\sqrt{n}}$, we have, for each bounded and Lipschitz function $\varphi$,
\[\lim_{n\rightarrow\infty}\bigg|\hat{\mathbf{E}}[\varphi(W_n)]-\mathcal{N}_G(\varphi)\bigg|=0,\] where $\mathcal{N}_G$ is the $G$-normal distribution with $G(a)=\frac{1}{2}\hat{\mathbf{E}}[aX_1^2]$, $a\in \mathbb{R}$.}

 In this paper, we shall give an estimate of the rate of convergence of this CLT by Stein's method under sublinear expectations:

 \textit{Under the same conditions as above, there exists $\alpha\in(0,1)$  depending  on $\underline{\sigma}$ and $\overline{\sigma}$, and a positive constant $C_{\alpha, G}$ depending  on $\alpha, \underline{\sigma}$ and $\overline{\sigma}$ such that
\[\sup_{|\varphi|_{Lip}\le1}\bigg|\hat{\mathbf{E}}[\varphi(W_n)]-\mathcal{N}_G(\varphi)\bigg|\leq C_{\alpha,G}\frac{\hat{\mathbf{E}}[|X_1|^{2+\alpha}]}{n^{\frac{\alpha}{2}}},\] where $\overline{\sigma}^2=\hat{\mathbf{E}}[X_1^2]$, $\underline{\sigma}^2=-\hat{\mathbf{E}}[-X_1^2]>0$ and $\mathcal{N}_G$ is the $G$-normal distribution with
\[G(a)=\frac{1}{2}\hat{\mathbf{E}}[aX_1^2]=\frac{1}{2}(\overline{\sigma}^2a^+-\underline{\sigma}^2a^-), \ a\in \mathbb{R}.\]}

\end{abstract}

\textbf{Key words}: Stein's method; normal approximation; sublinear expectation; $G$-normal distribution

\textbf{MSC-classification}: 60F05; 60G50

\section{Introduction}
The Central Limit Theorem is one of the most striking and useful results in probability and statistics, and explains why the normal distribution appears in areas as diverse as gambling, measurement error, sampling, and statistical mechanics. In essence, the Central Limit Theorem in its classical form states that a normal approximation applies to the distribution of quantities that can be modeled as the sum of many independent contributions, all of which are roughly the same size.

 Motivated by problems of model uncertainty in statistics, measures of risk and
superhedging in finance, Peng (2007) introduced the notion of sublinear expectations. A random variable $X$ in a sublinear  expectation space $(\Omega, \mathcal {H}, \hat{\mathbf{E}})$ with $\hat{\mathbf{E}}[|X|^3]<\infty$ is called $G$-normally distributed if for any independent copy $X'$ of $X$ and $\alpha, \beta\in \mathbb{R}$, \[ \alpha X+\beta X' \mathop{= }^d \sqrt{\alpha^2+\beta^2}X.\]  As is known, if $\hat{\mathbf{E}}$ is a linear expectation generated by a probability, a random variable $X$ with the above property is normally distributed. Suppose $X$ is $G$-normally distributed under $\hat{\mathbf{E}}$. For $\varphi\in C_{b,Lip}(\mathbb{R})$, the collection of bounded Lipstchiz functions on $\mathbb{R}$, set $\mathcal{N}_G[\varphi]=\hat{\mathbf{E}}[\varphi(X)]$. We call $\mathcal{N}_G$, a sublinear expectation on $C_{b,Lip}(\mathbb{R})$, a $G$-normal distribution. Here, the function $G$, defined by $G(a)=\frac{1}{2}\hat{\mathbf{E}}[aX^2]$, $a\in\mathbb{R}$, characterizes the variances of $X$.

Peng (2008) proved the Central Limit Theorem under a sublinear expectation.
\begin{theorem}\label {intro-PengCLT} Let $(X_i)_{i\ge 1}$ be a sequence of i.i.d random variables  under a sublinear expectation $\hat{\mathbf{E}}$ with $\hat{\mathbf{E}}[X_1]=\hat{\mathbf{E}}[-X_1]=0$ and $\hat{\mathbf{E}}[|X_1|^3]<\infty$. Setting $W_n:=\frac{X_1+\cdots+X_n}{\sqrt{n}}$, we have, for each  $\varphi\in C_{b,Lip}(\mathbb{R})$,
\[\lim_{n\rightarrow\infty}\bigg|\hat{\mathbf{E}}[\varphi(W_n)]-\mathcal{N}_G(\varphi)\bigg|=0,\] where $\mathcal{N}_G$ is the $G$-normal distribution with $G(a)=\frac{1}{2}\hat{\mathbf{E}}[aX_1^2]$, $a\in \mathbb{R}$.
\end{theorem}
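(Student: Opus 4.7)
The plan is to adapt Stein's method to the sublinear setting through the $G$-heat equation. For a fixed Lipschitz test function $\varphi$, let $V(t,x)$ denote the (viscosity) solution of the Cauchy problem
\[
\partial_t V = G(\partial_{xx} V), \qquad V(0,x) = \varphi(x), \qquad (t,x) \in [0,1] \times \mathbb{R},
\]
so that $V(1, 0) = \mathcal{N}_G[\varphi]$ by the definition of the $G$-normal distribution. The target quantity then takes the form
\[
\hat{\mathbf{E}}[\varphi(W_n)] - \mathcal{N}_G[\varphi] = \hat{\mathbf{E}}[V(0, W_n)] - V(1, 0).
\]

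Setting $S_k := X_1 + \cdots + X_k$ and $t_k := 1 - k/n$, the right-hand side telescopes along the grid into $n$ one-step differences
\[
\hat{\mathbf{E}}[V(t_{k+1}, S_{k+1}/\sqrt{n})] - \hat{\mathbf{E}}[V(t_k, S_k/\sqrt{n})], \qquad k = 0, \ldots, n-1.
\]
Because $X_{k+1}$ is Peng-independent of $S_k$, the sublinear tower property reduces each one-step difference to a problem involving only the fresh increment $X_{k+1}$ and a frozen spatial parameter $y = S_k/\sqrt{n}$. I would Taylor-expand $V(t_{k+1}, y + X_{k+1}/\sqrt{n})$ around the base point $(t_k, y)$ to second order in space and first order in time, always evaluating derivatives at $t_k$ (not $t_{k+1}$) to keep away from the singularity at the terminal time. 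The mean-zero conditions $\hat{\mathbf{E}}[X_{k+1}] = \hat{\mathbf{E}}[-X_{k+1}] = 0$ annihilate the first-order spatial term, and the defining identity $G(\partial_{xx} V(t_k, y)) = \tfrac{1}{2}\hat{\mathbf{E}}[\partial_{xx} V(t_k, y)\, X_1^2]$ together with the $G$-heat equation $\partial_t V = G(\partial_{xx} V)$ cancels the second-order spatial and first-order temporal terms against each other. What remains is a Hölder residual of the form
\[
\frac{X_{k+1}^2}{2n}\bigl[\partial_{xx} V(t_k, \eta) - \partial_{xx} V(t_k, y)\bigr] + (\text{analogous time-H\"older correction}).
\]

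The key analytic input is therefore a quantitative $C^{2,\alpha}$ estimate for $V$. Because $G(a) = \tfrac{1}{2}(\overline{\sigma}^2 a^+ - \underline{\sigma}^2 a^-)$ is convex and uniformly elliptic between $\underline{\sigma}^2 > 0$ and $\overline{\sigma}^2$, the $G$-heat equation is a fully nonlinear uniformly parabolic equation, and Krylov-type interior Hölder theory yields an exponent $\alpha = \alpha(\underline{\sigma}, \overline{\sigma}) \in (0, 1)$ and a constant $C_{\alpha, G}$ for which
\[
|\partial_{xx} V(t, x) - \partial_{xx} V(t, y)| \le C_{\alpha, G}\,|\varphi|_{Lip}\, t^{-(1+\alpha)/2}\,|x - y|^\alpha
\]
on $(0, 1] \times \mathbb{R}^2$, with an analogous $(\alpha/2)$-Hölder control in time. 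Inserting this bound into the residual estimate produces a one-step error of order $t_k^{-(1+\alpha)/2}\,\hat{\mathbf{E}}[|X_1|^{2+\alpha}]/n^{1+\alpha/2}$. Summing over $k$, the factor $\tfrac{1}{n}\sum_{k=0}^{n-1} t_k^{-(1+\alpha)/2}$ is a Riemann approximation of $\int_0^1 t^{-(1+\alpha)/2}\,dt = 2/(1-\alpha)$, which is finite precisely because $\alpha < 1$; this recovers the announced rate $C_{\alpha, G}\,\hat{\mathbf{E}}[|X_1|^{2+\alpha}]/n^{\alpha/2}$.

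The main obstacle is the regularity estimate itself. Classical Stein's method exploits the explicit smoothness of the Gaussian heat kernel, whereas here no closed form exists and one must invoke Krylov's interior Hölder theory for fully nonlinear parabolic equations, which in general yields only Hölder-continuous second derivatives with a non-explicit exponent $\alpha$ determined by the ellipticity ratio $\underline{\sigma}/\overline{\sigma}$. This is precisely why the exponent in the theorem must be strictly less than $1$ and depends on the variance bounds. A secondary but genuine difficulty is that the cancellation of the first- and second-order Taylor terms has to be carried out jointly inside the sublinear expectation: naive subadditivity would produce an asymmetric loss of order $(\overline{\sigma}^2 - \underline{\sigma}^2)|\partial_{xx} V|/n$ per step, which upon summation would ruin the rate, so the mean-zero condition on $X_1$ must be combined with the defining relation $G(a) = \tfrac{1}{2}\hat{\mathbf{E}}[aX_1^2]$ in a single application of the sublinear expectation rather than by splitting the linear and quadratic contributions.
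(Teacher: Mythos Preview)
The paper does not prove Theorem~\ref{intro-PengCLT} itself (it is quoted from Peng~2008); its own argument is for the quantitative Theorem~\ref{Rate-PengCLT}, which you are in effect also sketching, and your scaffold matches that proof: telescope the $G$-heat solution $u$ along the grid $1-i/n$, freeze the past by independence, and control each one-step error via the Krylov interior $C^{2,\alpha}$ estimate together with the $(2+\alpha)$-moment; you have also correctly isolated the two real obstacles (the non-explicit H\"older exponent, and the need to run the quadratic cancellation inside a \emph{single} sublinear expectation rather than split it by subadditivity). The one genuine divergence is in the one-step analysis. You Taylor-expand $u(t_{k+1},y+X/\sqrt n)$ directly about $(t_k,y)$ and cancel $\partial_t u$ against $G(D_x^2u)$, whereas the paper routes each step through an integral Stein-type identity (Lemma~\ref{lemma-SteinEquation}): $c_{i,n}(x)-b_{i,n}(x)=\int_0^1(1-s)^{-1}\mu_s[\mathcal{L}_G\phi_s]\,ds$ with $\phi_s(y)=u\bigl(x+\sqrt{(1-s)/n}\,y,\,1-\tfrac in+\tfrac sn\bigr)$, and then bounds the integrand pointwise by $2[\phi_s'']_\alpha\,\hat{\mathbf E}[|X_1|^{2+\alpha}]$ (Lemma~\ref{SteinEstimate}). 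The payoff of the identity is that the time argument $1-\tfrac in+\tfrac sn$ remains strictly positive for $s>0$ even at the last step $i=n$, so the bound $[D_x^2u(\cdot,t)]_\alpha\le c_{\alpha,G}\,t^{-(1+\alpha)/2}$ applies uniformly and the $n$ pieces assemble \emph{exactly} into $\int_0^1 s^{-(1+\alpha)/2}\,ds$. Your direct expansion must instead cope separately with the terminal step where $t_{k+1}=0$ and the interior estimate degenerates; this is fixable (a crude Lipschitz bound on that single step costs only $O(n^{-1/2})\le O(n^{-\alpha/2})$), but it is precisely the wrinkle the paper's Stein substitute is designed to smooth over.
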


Just like the linear case, this theorem mathematically justified, at least asymptotically,  the $G$-normal distribution
may be used to approximate quantities which can be formulated as the sums of independent and identically distributed random variables under a sublinear expectation. However,
even though in practice sample sizes may be large, or may appear to be sufficient for
the purposes at hand, depending on that and other factors, the normal approximation
may or may not be accurate. It is here the need for the evaluation of the quality of
the normal approximation arises.

For the linear case,  Stein's method, which made its first appearance in the ground
breaking work of Stein (1972), is a powerful tool to estimate  the error of normal approximation. The cornerstone of  Stein's method is the Stein equation (refer to \cite{CGS11} for more details): For a standard normally distributed random variable $Z$ and given $\varphi$, solve the following equation for $f$,
\begin {eqnarray}\label {Se}f'(x)-xf(x)=\varphi(x)-E[\varphi(Z)].
\end {eqnarray}
Then, for any random variable $W$, evaluate the left hand side of the Stein equation at $W$ and
take the expectation, obtaining $E[\varphi(W)]-E[\varphi(Z)]$.

The job of this paper is to introduce the ideas of Stein's method to the nonlinear case. The expected Stein equation for $G$-normal distribution would be
\begin {eqnarray}\label {GSe}G(f''(x))-\frac{x}{2}f'(x)=\varphi(x)-\mathcal{N}_G[\varphi].
\end {eqnarray}
Unfortunately, for $\varphi\in C_{b,Lip}(\mathbb{R})$, Equ. (\ref{GSe}) generally does not have a solution. Therefore, the first step is to find a substitute of the Stein equation.

For $\varphi\in C_{b,Lip}(R)$, the function $u(x,t):=\mathcal{N}_G[\varphi(x+\sqrt{t}\cdot)]$ is the unique viscosity solutions of the $G$-heat equation below
\begin {eqnarray*}
\partial_t u-G(D^2_x u)&=&0, \ (x,t)\in \mathbb{R}\times(0,\infty),\\
                        u(x,0)&=& \varphi (x),
\end {eqnarray*} where $G(a)=\frac{1}{2}\mathcal{N}_G[ax^2]$, $a\in \mathbb{R}$, is  determined by the variances $\overline{\sigma}^2:=\mathcal{N}_G[x^2]$ and  $\underline{\sigma}^2:=-\mathcal{N}_G[-x^2]$. So, if $\overline{\sigma}=\underline{\sigma}=\sigma$, $\mathcal{N}_G$ is nothing but the classical normal distribution $N(0,\sigma^2)$.

 Let $\Theta$ be a weakly compact subset  of probability measures  on $(%
\mathbb{R},\mathcal{B}(\mathbb{R}))$. For the sublinear expectation $\mathcal{N}[\varphi]=\sup_{\mu\in\Theta}\mu[\varphi]$
 on $C_{b,Lip}(\mathbb{R})$ and  a function $\phi\in C_{b,Lip}(\mathbb{R})$, set \[w(t)=\mathcal{N}[v(\sqrt{1-t}\cdot,t)],\] where $v$ is the solution to the $G$-heat equation with initial value $\phi$.
Then $w(1)=\mathcal{N}_G[\phi]$, $w(0)=\mathcal{N}[\phi]$, and it can be shown that, for a.e. $s\in(0,1)$,
\begin {eqnarray}\label {intro-derivative}
w^{\prime}(s)=\frac{1}{1-s}\mu_s[G(\phi''_s(x))-\frac{1}{2}x\phi'_s(x)],
\end {eqnarray} where $\phi_s(x)=v(\sqrt{1-s}x,s)$ and $\mu_s\in\Theta$ with $\mu_s[\phi_s]=\mathcal{N}[\phi_s]$.  From this, we get a substitute of the Stein equation.

\textbf{Step 1.} $\mathcal{N}_G[\phi]-\mathcal{N}[\phi]=\int_0^1\frac{1}{1-s}\mu_s[G(\phi''_s(x))-\frac{1}{2}x\phi'_s(x)] ds$.

Return to the linear case,  i.e.,  $\underline{\sigma}=\overline{\sigma}$ and $\Theta=\{\mu\}$ is a singleton, the above formula will reduce to the classical Stein equation (see Remark \ref {remark-SteinE} for details).

Now the next job is to calculate the expectation on the right side of the equality (\ref {intro-derivative}).

Let $\alpha\in(0,1)$. Suppose $\mathcal{N}[x]=\mathcal{N}[-x]=0$ and $\mathcal{N}[|x|^{2+\alpha}]<\infty$. For $\phi\in C_b^{2,\alpha}(\mathbb{R})$ and $\mu\in\Theta$ with $\mu[\phi]=\mathcal{N}[\phi]$, we have

\textbf{Step 2.} $\bigg|\mu[G(\phi''(x))-\frac{1}{2}x\phi'(x)]\bigg|\le 2 [\phi'']_{\alpha} \mathcal{N}[|x|^{2+\alpha}],$ where $G(a)=\frac{1}{2}\mathcal{N}[a|x|^2]$, $a\in \mathbb{R}$.

It merits to emphasize that the function $G$ in Step 1 is determined by the variances of $\mathcal{N}_G$ and that the function $G$ in Step 2 is determined by the variances of $\mathcal{N}$. In other words, to estimate $\mathcal{N}_G[\phi]-\mathcal{N}[\phi]$ applying Step 1 and Step 2 requires that  $\mathcal{N}_G$ and $\mathcal{N}$ have the same variances.

Besides, note that $\phi_s(x)$ in the equality (\ref {intro-derivative}) is the solution to the $G$-heat equation. Therefore, to apply the estimate in Step 2, we need the regularity properties of the $G$-heat equation, which can be found in the literatures of the partial differential equations (see Section 3 for details).

\textbf{Step 3.}  $[D_x^2v(\cdot,t)]_{\alpha}\le c_{\alpha, G}\frac{1}{t^{\frac{1}{2}+\frac{\alpha}{2}}}|\phi|_{Lip}$ for some $\alpha\in(0,1)$ and $c_{\alpha,G}>0$.

Following these three steps, we  give the estimates of the rate of convergence of Peng's Central Limit Theorem.

\emph{Under the same conditions as those in Theorem \ref{intro-PengCLT}, there exists $\alpha\in(0,1)$  depending  on $\underline{\sigma}$ and $\overline{\sigma}$, and a positive constant $C_{\alpha, G}$ depending  on $\alpha, \underline{\sigma}$ and $\overline{\sigma}$ such that
\[\sup_{|\varphi|_{Lip}\le1}\bigg|\hat{\mathbf{E}}[\varphi(\frac{X_1+\cdots+X_n}{\sqrt{n}})]-\mathcal{N}_G(\varphi)\bigg|\leq C_{\alpha,G}\frac{\hat{\mathbf{E}}[|X_1|^{2+\alpha}]}{n^{\frac{\alpha}{2}}}ïŒ\] where $\overline{\sigma}^2=\hat{\mathbf{E}}[X_1^2]$, $\underline{\sigma}^2=-\hat{\mathbf{E}}[-X_1^2]>0$ and $\mathcal{N}_G$ is the $G$-normal distribution with
\[G(a)=\frac{1}{2}\hat{\mathbf{E}}[aX_1^2]=\frac{1}{2}(\overline{\sigma}^2a^+-\underline{\sigma}^2a^-), \ a\in \mathbb{R}.\]
}

Here $\alpha$ is the H$\ddot{\textmd{o}}$lder exponent in Step 3, and $C_{\alpha,G}$ can be chosen as $\frac{4 }{1-\alpha}c_{\alpha,G}$
with $c_{\alpha,G}$ the $\alpha$-H$\ddot{\textmd{o}}$lder constant in Step 3.

In Section 2, we review the basic notions and results of sublinear expectations. In Section 3, we introduce the regularity properties of the $G$-heat equation that will be used in this paper. In Section 4, we shall generalize the idea of Stein's method to the sublinear expectation space, based on which we get the rate of convergence of Peng's Central Limit Theorem. In Section 5, we consider the CLT under sublinear expectations of a sequence of independent random variables which may not be identically distributed.
\section{Basic Notions of Sublinear Expectations}
Here we review basic notions and results of sublinear expectations.  The readers may refer to \cite{P07a}-\cite{P10} for more details.

Let $\Omega$ be a given set and let $\mathcal{H}$ be a  linear space of real valued functions defined on $\Omega$ such that for any $X\in\mathcal{H}$ and $\varphi\in C_{b,Lip}(\mathbb{R})$, we have $\varphi(X)\in \mathcal{H}$.
The space $\mathcal{H}$ is considered as our space of random variables.

\begin{definition}
A  sublinear expectation  is a functional $\hat{\mathbf{E}}: \mathcal{H}\to \mathbb{R}$ satisfying
\begin{description}
\item[E1.]  $\hat{\mathbf{E}}[X]\geq \hat{\mathbf{E}}[Y],\ \text{if}\ X\ge Y$;

\item[E2.]  $\hat{\mathbf{E}}[\lambda X]=\lambda \hat{\mathbf{E}}[X],\ \text{for}\ \lambda\geq 0$;

\item[E3.]  $\hat{\mathbf{E}}[c]=c, \ \text{for}\ c\in \mathbb{R}$;

\item[E4.]  $\hat{\mathbf{E}}[X+Y]\leq \hat{\mathbf{E}}[X]+\hat{\mathbf{E}}[Y]$, \ for  $X, Y\in \mathcal{H}$;

\item[E5.]  $\hat{\mathbf{E}}[\varphi_n(X)]\downarrow0$, for $X\in\mathcal{H}$ and $\varphi_n\in C_{b,Lip}(\mathbb{R})$, $\varphi_n\downarrow0$.
\end{description}
\end{definition}
The triple $(\Omega, \mathcal{H}, \hat{\mathbf{E}})$ is called a  sublinear expectation space. For $X\in\mathcal{H}$, set \[\mathcal{N}^X[\varphi]=\hat{\mathbf{E}}[\varphi(X)],  \ \varphi\in C_{b,Lip}(\mathbb{R}),\]  which is a sublinear expectation on $C_{b,Lip}(\mathbb{R})$. We say $X$ is distributed as  $\mathcal{N}^X$, write $X\sim\mathcal{N}^X$.  A functional $\mathcal{N}$ is a sublinear expectation  on $C_{b,Lip}(\mathbb{R})$ if and only if it can be represented as the
supremum expectation of a weakly compact subset $\Theta$ of probability measures  on $(%
\mathbb{R},\mathcal{B}(\mathbb{R}))$ (see \cite{DHP11}),
\begin {eqnarray}
\mathcal{N}[\varphi]=\sup_{\mu\in\Theta}\mu[\varphi], \ \textmd{for all} \  \varphi\in C_{b,Lip}(\mathbb{R}).
\end {eqnarray}

\begin {definition} Let  $(\Omega, \mathcal{H}, \hat{\mathbf{E}})$ be a sublinear expectation space. We say a random vector $\mathbf{ X}=(X_1,\cdots, X_m)\in\mathcal{H}^m$ is independent from $\mathbf{ Y}=(Y_1,\cdots, Y_n)\in\mathcal{H}^n$ if for any $\varphi\in C_{b,Lip}(\mathbb{R}^{n+m})$
\[\hat{\mathbf{E }}[\varphi(\mathbf{ Y},\mathbf{ X})]=\hat{\mathbf{ E}}[\hat{\mathbf{ E}}[\varphi(\mathbf{ y}, \mathbf{ X})]|_{\mathbf{ y}=\mathbf{ Y}}].\]
\end {definition}
In a sublinear expectation space,  the fact that $\mathbf{ X}$ is independent from $\mathbf{ Y}$ does not imply that $\mathbf{Y}$ is independent from $\mathbf{X}$. We say $(X_i)_{i\ge1}$ is a sequence of independent random variables means that $X_{i+1}$ is independent from $(X_1,\cdots, X_i)$ for each $i\in\mathbb{N}$.

\begin {definition} Let $(\Omega, \mathcal{H}, \hat{\mathbf{E}})$ and   $(\widetilde{\Omega}, \mathcal{\widetilde{H}}, \widetilde{\mathbf{E}})$ be two sublinear expectations. A random vector $\mathbf{ X}$ in  $(\Omega, \mathcal{H}, \hat{\mathbf{E}})$ is said to be identically distributed with another random vector $\mathbf{ Y}$ in $(\widetilde{\Omega}, \mathcal{\widetilde{H}}, \widetilde{\mathbf{E}})$ (write $\mathbf{ X}\mathop{=}\limits^d\mathbf{ Y}$), if for any bounded and Lipschitz function $\varphi$, \[\hat{\mathbf{ E}}[\varphi(X)]=\widetilde{\mathbf{ E}}[\varphi(Y)].\]

\end {definition}

\section{Regularity Estimates for the $G$-heat Equation}
In this section, we shall introduce  a regularity result for the $G$-heat equation, which is important for us to obtain  the rate of convergence of Peng's Central Limit Theorem.
\begin {eqnarray}\label {Gequation}
u_t(x,t)-G(D^2_xu(x,t))&=&0, \ (x,t)\in \mathbb{R}\times (0,\infty),\\
u(x,0)&=&\varphi(x),
\end {eqnarray}where $G(a)=\frac{1}{2}(\overline{\sigma}^2a^+-\underline{\sigma}^2a^-)$ for some $\overline{\sigma}\ge\underline{\sigma}>0$.

Throughout this paper, we shall always suppose that $\underline{\sigma}>0$.

For regularity estimates of (more general) fully nonlinear partial differential equations, we refer the readers to the papers Kruzhkov (1967), Krylov (1987), Wang (1992) and the book Lieberman (2005) and the references therein. Here we only introduce a result that will be used in this paper.

First of all, for any initial value $\varphi\in C_{b, Lip}(\mathbb{R})$, the
collection of bounded Lipstchiz functions on $\mathbb{R}$, the $G$-heat equation has a unique classical solution. Furthermore, we have the following interior regularity estimates:

\textit{There exists $\alpha\in(0,1)$ depending  on $\underline{\sigma}$ and $\overline{\sigma}$, and a positive constant $c_{\alpha, G}$ depending  on $\alpha, \underline{\sigma}$ and $\overline{\sigma}$
such that if $u\in C^{2,1}(\mathbb{R}\times (0, +\infty))$ is a solution to the $G$-heat equation, we have
\begin {eqnarray}
[D_x^2u(\cdot,1)]_{\alpha}\le c_{\alpha, G}\|Du\|_{\infty, \mathbb{R}\times [0,1]}.
\end {eqnarray}}
Here, $[f]_\alpha=\sup\limits_{x,y\in\mathbb{R}, x\neq y}\frac{|f(x)-f(y)|}{|x-y|^\alpha}$.

Set $v_\varepsilon(x,t)=\frac{1}{\varepsilon}u(\varepsilon x, \varepsilon^2t)$ for $\varepsilon\in(0,1)$. Then $v_\varepsilon$ is also a solution to the $G$-heat equation. So we have
\begin {eqnarray*}
[D_x^2v_\varepsilon(\cdot,1)]_{\alpha}\le c_{\alpha, G}\|Dv_\varepsilon\|_{\infty, \mathbb{R}\times [0,1]}.
\end {eqnarray*}
Noting that
\[[D_x^2 v_\varepsilon(\cdot,1)]_\alpha=\varepsilon^{1+\alpha}[D_x^2u(\cdot,\varepsilon^2)]_\alpha\]
and
\[\|D_x v_\varepsilon\|_{\infty, \mathbb{R}\times [0,1]}\le\|D_xu\|_{\infty, \mathbb{R}\times [0,1]},\]
we get
\[\varepsilon^{1+\alpha}[D_x^2u(\cdot,\varepsilon^2)]_\alpha\le c_{\alpha,G}\|D_xu\|_{\infty, \mathbb{R}\times [0,1]}.\] We summarize the above arguments as the following theorem.

\begin {theorem} \label {IRE-Gequation}  There exists $\alpha\in(0,1)$ depending  on $\underline{\sigma}$ and $\overline{\sigma}$, and a positive constant $c_{\alpha, G}$ depending  on $\alpha, \underline{\sigma}$ and $\overline{\sigma}$
such that if $u\in C^{2,1}(\mathbb{R}\times (0, +\infty))$ is a solution to the $G$-heat equation, we have, for $t\in (0,1]$,
\begin {eqnarray}
[D_x^2u(\cdot,t)]_{\alpha}\le c_{\alpha, G}\frac{1}{t^{\frac{1}{2}+\frac{\alpha}{2}}}\|D_xu\|_{\infty, \mathbb{R}\times [0,1]}.
\end {eqnarray}
\end {theorem}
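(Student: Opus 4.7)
The plan is to take the base interior $C^{2,\alpha}$ estimate at the fixed time slice $t=1$,
\[
[D_x^2 u(\cdot,1)]_\alpha \le c_{\alpha,G}\|D_x u\|_{\infty,\mathbb{R}\times[0,1]},
\]
as given from the classical regularity theory for fully nonlinear uniformly parabolic equations (Kruzhkov, Krylov, Wang, Lieberman); uniform parabolicity is guaranteed here by the standing hypothesis $\underline{\sigma}>0$, which makes $G$ a $(\underline{\sigma}^2,\overline{\sigma}^2)$-uniformly elliptic Isaacs-type operator. The content of the theorem as stated is then to promote this single-time estimate to a quantitative estimate valid for all $t\in(0,1]$, and I would do this by a standard parabolic scaling argument, following the computation already sketched in the text immediately preceding the statement.

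Concretely, for $\varepsilon\in(0,1]$ set $v_\varepsilon(x,t):=\varepsilon^{-1}u(\varepsilon x,\varepsilon^2 t)$. A direct differentiation gives $\partial_t v_\varepsilon(x,t)=\varepsilon\, u_t(\varepsilon x,\varepsilon^2 t)$ and $D_x^2 v_\varepsilon(x,t)=\varepsilon\, D_x^2 u(\varepsilon x,\varepsilon^2 t)$. Because $G(a)=\tfrac{1}{2}(\overline{\sigma}^2 a^+-\underline{\sigma}^2 a^-)$ is positively $1$-homogeneous in $a$, we have
\[
G(D_x^2 v_\varepsilon(x,t))=\varepsilon\, G(D_x^2 u(\varepsilon x,\varepsilon^2 t))=\varepsilon\, u_t(\varepsilon x,\varepsilon^2 t)=\partial_t v_\varepsilon(x,t),
\]
so $v_\varepsilon\in C^{2,1}(\mathbb{R}\times(0,\infty))$ is again a classical solution of the $G$-heat equation. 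Applying the base estimate to $v_\varepsilon$ yields $[D_x^2 v_\varepsilon(\cdot,1)]_\alpha \le c_{\alpha,G}\|D_x v_\varepsilon\|_{\infty,\mathbb{R}\times[0,1]}$.

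It remains to translate both sides back to $u$. For the right-hand side, $D_x v_\varepsilon(x,t)=D_x u(\varepsilon x,\varepsilon^2 t)$ and the image of $\mathbb{R}\times[0,1]$ under $(x,t)\mapsto(\varepsilon x,\varepsilon^2 t)$ with $\varepsilon\in(0,1]$ lies inside $\mathbb{R}\times[0,1]$, so $\|D_x v_\varepsilon\|_{\infty,\mathbb{R}\times[0,1]}\le \|D_x u\|_{\infty,\mathbb{R}\times[0,1]}$. For the left-hand side, $D_x^2 v_\varepsilon(x,1)=\varepsilon\, D_x^2 u(\varepsilon x,\varepsilon^2)$, and a change of variables in the definition of $[\cdot]_\alpha$ produces the extra factor $\varepsilon^\alpha$, giving $[D_x^2 v_\varepsilon(\cdot,1)]_\alpha=\varepsilon^{1+\alpha}[D_x^2 u(\cdot,\varepsilon^2)]_\alpha$. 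Setting $\varepsilon=\sqrt{t}$ for $t\in(0,1]$ rearranges to the claimed bound. The only substantive step is the black-boxed base estimate at $t=1$; everything else is elementary and the sole potential pitfall is verifying $1$-homogeneity of $G$ (so that $v_\varepsilon$ remains a solution), which is immediate from the explicit form of $G$.
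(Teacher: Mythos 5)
Your proposal is correct and follows essentially the same route as the paper: the base interior estimate at $t=1$ is quoted from the cited regularity literature, and the parabolic rescaling $v_\varepsilon(x,t)=\varepsilon^{-1}u(\varepsilon x,\varepsilon^2 t)$ with $\varepsilon=\sqrt{t}$ is exactly the paper's argument. Your explicit check that $v_\varepsilon$ solves the $G$-heat equation via the positive $1$-homogeneity of $G$ is a detail the paper leaves implicit, but nothing differs in substance.
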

For $\varphi\in C_{b,Lip}(\mathbb{R})$, if $u$ is the solution to the $G$-heat equation with initial value $\varphi$, we know that $u(\cdot,t)$ is also uniformly Lipschitz continuous with
\[\|D_xu\|_{\infty, \mathbb{R}\times [0,1]}\le |\varphi|_{Lip}.\] Hence, we have the following immediate corollary of Theorem \ref{IRE-Gequation}.
\begin {corollary} There exists $\alpha\in(0,1)$ depending  on $\underline{\sigma}$ and $\overline{\sigma}$, and a positive constant $c_{\alpha, G}$ depending  on $\alpha, \underline{\sigma}$ and $\overline{\sigma}$
such that if  $\varphi\in C_{b,Lip}(\mathbb{R})$ with $|\varphi|_{Lip}\le 1$, and $u$ is the solution to the $G$-heat equation with initial value $\varphi$, then we have
\begin {eqnarray}
[D_x^2u(\cdot,t)]_{\alpha}\le c_{\alpha, G}\frac{1}{t^{\frac{1}{2}+\frac{\alpha}{2}}}.
\end {eqnarray}
\end {corollary}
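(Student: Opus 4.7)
The corollary is a direct specialization of Theorem \ref{IRE-Gequation} to initial data with $|\varphi|_{Lip}\le1$; the only real task is to identify $\|D_x u\|_{\infty,\mathbb{R}\times[0,1]}$ with the Lipschitz constant of $\varphi$. I would therefore organize the argument in two steps: first, establish the Lipschitz-contraction property of the $G$-heat semigroup, and then plug this bound into the interior estimate already proved.

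For the first step I would recall that, for $\varphi\in C_{b,Lip}(\mathbb{R})$, the unique classical solution admits the representation $u(x,t)=\mathcal{N}_G[\varphi(x+\sqrt{t}\,\cdot)]$, stated in the introduction as the connection between the $G$-heat equation and the $G$-normal distribution. Using this together with the monotonicity and sublinearity of $\mathcal{N}_G$ (axioms E1 and E4), I would show that for all $x,y\in\mathbb{R}$ and $t\in[0,1]$,
\[|u(x,t)-u(y,t)|\le \mathcal{N}_G\bigl[|\varphi(x+\sqrt{t}\cdot)-\varphi(y+\sqrt{t}\cdot)|\bigr]\le |\varphi|_{Lip}\,|x-y|.\]
Hence $u(\cdot,t)$ is Lipschitz uniformly in $t\in[0,1]$ with constant bounded by $|\varphi|_{Lip}\le 1$, which gives $\|D_x u\|_{\infty,\mathbb{R}\times[0,1]}\le 1$. (The paragraph just before the corollary essentially asserts this identity, so the job is only to justify it briefly.)

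For the second step I would simply apply Theorem \ref{IRE-Gequation} to $u$ for $t\in(0,1]$ to obtain
\[[D_x^2u(\cdot,t)]_\alpha\le c_{\alpha,G}\,t^{-\frac{1}{2}-\frac{\alpha}{2}}\,\|D_xu\|_{\infty,\mathbb{R}\times[0,1]}\le c_{\alpha,G}\,t^{-\frac{1}{2}-\frac{\alpha}{2}},\]
which is the desired estimate; the exponent $\alpha$ and constant $c_{\alpha,G}$ are the same as in the theorem, so the stated dependences on $\underline{\sigma},\overline{\sigma}$ are automatic. There is no substantive obstacle: the hard analytic work (the interior Hölder estimate for solutions of the fully nonlinear equation $u_t=G(D_x^2u)$) is bundled into Theorem \ref{IRE-Gequation}, which relies on the Krylov--Wang regularity theory cited in Section 3. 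The only delicacy is verifying that the classical solution $u$ is $C^{2,1}$ on $\mathbb{R}\times(0,\infty)$ so that the theorem is applicable, and this is the existence statement recalled at the start of Section 3. Thus the corollary is essentially a convenient repackaging of Theorem \ref{IRE-Gequation} in the form needed for Step 3 of Section 4.
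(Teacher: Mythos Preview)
Your proposal is correct and follows exactly the paper's approach: the corollary is stated as an immediate consequence of Theorem \ref{IRE-Gequation} together with the Lipschitz-contraction property $\|D_xu\|_{\infty,\mathbb{R}\times[0,1]}\le|\varphi|_{Lip}$, which the paper asserts in the paragraph preceding the corollary and which you justify via the representation $u(x,t)=\mathcal{N}_G[\varphi(x+\sqrt{t}\,\cdot)]$. There is nothing to add.
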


\section {Rate of Convergence of Peng's CLT}
Let $\mathcal{N}[\varphi]=\sup_{\mu\in\Theta}\mu[\varphi]$ be a sublinear expectation on  $C_{b,Lip}(\mathbb{R})$. Throughout this article, we suppose the following additional property:
\begin{description}
\item[(H)] $\lim_{N\rightarrow\infty}\mathcal{N}[|x|1_{[|x|>N]}]=0.$
\end{description}
Note that the condition $(H)$ is naturally satisfied if $\mathcal{N}[|x|^{1+\delta}]<\infty$ for some $\delta>0$.

Define $\xi: \mathbb{R}\rightarrow\mathbb{R}$ by $\xi(x)=x$. Sometimes, we write $\mathcal{N}_G[\varphi], \ \mathcal{N}[\varphi]$ and  $\mu[\varphi]$ by $\mathbb{E}_G[\varphi(\xi)], \ \mathbb{E}[\varphi(\xi)]$ and $E_\mu[\varphi(\xi)]$, respectively. For  $\varphi\in C_{b,Lip}(\mathbb{R})$, set $\Theta_\varphi=\{\mu\in \Theta:E_{\mu}[\varphi(\xi)]=\mathbb{E}[\varphi(\xi)]\}$.
\begin{lemma}  \label {lemma-SteinEquation}For $\phi\in C_{b,Lip}(\mathbb{R})$, let $v$ be the solution to the $G$-heat equation with initial value $\phi$ and set $\phi_s(x):=v( \sqrt{1-s}x,s)$. Then
\begin{eqnarray}\label {SteinEquation}\mathcal{N}_G[\phi]-\mathcal{N}[\phi]=\int_0^1\frac{1}{1-s}\sup_{\mu_s\in\Theta_s}E_{\mu_s}[\mathcal{L}_G\phi_s(\xi)]ds=\int_0^1\frac{1}{1-s}\inf_{\mu_s\in\Theta_s}E_{\mu_s}[\mathcal{L}_G\phi_s(\xi)]ds,
\end {eqnarray} where $\mathcal{L}_G\phi_s(x)=G(\phi_s''(x))-\frac{x}{2}\phi_s'(x)$, $\Theta_s=\Theta_{\phi_s}$. Particularly, we have, for a.e. $s\in(0,1)$, \[\sup_{\mu_s\in\Theta_s}E_{\mu_s}[\mathcal{L}_G\phi_s(\xi)]=\inf_{\mu_s\in\Theta_s}E_{\mu_s}[\mathcal{L}_G\phi_s(\xi)].\]
\end{lemma}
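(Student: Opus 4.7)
The strategy is to differentiate the path $s\mapsto w(s):=\mathcal{N}[\phi_s]$ and apply a Danskin-type envelope formula. The two endpoints are immediate: from $\phi_0(x)=v(x,0)=\phi(x)$ one has $w(0)=\mathcal{N}[\phi]$, while $\phi_1(x)=v(0,1)$ is the constant $\mathcal{N}_G[\phi]$ (by the stochastic representation of the $G$-heat equation recalled in the introduction), so $w(1)=\mathcal{N}_G[\phi]$. The crucial infinitesimal computation is obtained by differentiating $\phi_s(x)=v(\sqrt{1-s}x,s)$ in $s$ and then substituting $v_t=G(v_{xx})$: combining the chain-rule identities $\phi_s'(x)=\sqrt{1-s}\,v_x(\sqrt{1-s}x,s)$ and $\phi_s''(x)=(1-s)v_{xx}(\sqrt{1-s}x,s)$ with the positive homogeneity $G(\lambda a)=\lambda G(a)$ for $\lambda\ge 0$, one collapses the result to $\partial_s\phi_s(x)=\frac{1}{1-s}\mathcal{L}_G\phi_s(x)$. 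The interior estimates of Theorem \ref{IRE-Gequation} together with assumption $(H)$ then show that $\frac{1}{1-s}E_\mu[\mathcal{L}_G\phi_s(\xi)]$ is integrable on $(0,1)$ uniformly in $\mu\in\Theta$, the singularities being integrable powers of the form $s^{-1/2}$ and $(1-s)^{-1/2}(1+|\xi|)$.

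Next I would verify that $w$ is absolutely continuous. Sublinearity yields $|w(s')-w(s)|\le\mathcal{N}[|\phi_{s'}-\phi_s|]$, and writing $\phi_{s'}-\phi_s=\int_s^{s'}\partial_t\phi_t\,dt$ followed by a Fubini step for each $\mu\in\Theta$ (then taking the supremum) bounds this by $\int_s^{s'}\mathcal{N}[|\partial_t\phi_t|]\,dt$, which is integrable by the previous paragraph. Hence $w$ is differentiable a.e. For the envelope formula, fix $s$ and choose $\mu_s\in\Theta_s$, $\mu_{s'}\in\Theta_{s'}$; the maximizer property gives the sandwich
\[E_{\mu_s}[\phi_{s'}-\phi_s]\le w(s')-w(s)\le E_{\mu_{s'}}[\phi_{s'}-\phi_s].\]
Dividing by $s'-s$ and letting $s'\downarrow s$, the left inequality immediately produces $\partial_+w(s)\ge E_{\mu_s}[\partial_s\phi_s]$ for every $\mu_s\in\Theta_s$. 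For the right inequality I would use the weak compactness of $\Theta$ to extract a weakly convergent subsequence $\mu_{s'_k}\to\mu^*$, verify $\mu^*\in\Theta_s$ using continuity of $w$ and the uniform convergence $\phi_{s'}\to\phi_s$, and then pass the limit through the expectation using the linear-growth control on $\partial_s\phi_s$ and the uniform integrability of $|\xi|$ supplied by $(H)$. A symmetric left-sided argument yields the matching lower bound. Consequently, at every point of differentiability, $w'(s)=E_{\mu_s}[\partial_s\phi_s]=\frac{1}{1-s}E_{\mu_s}[\mathcal{L}_G\phi_s(\xi)]$ for \emph{every} $\mu_s\in\Theta_s$, which is exactly the asserted $\sup=\inf$ equality.

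The final step will be to integrate: for any measurable selection $s\mapsto\mu_s\in\Theta_s$,
\[w(1-\epsilon)-w(0)=\int_0^{1-\epsilon}\frac{1}{1-s}E_{\mu_s}[\mathcal{L}_G\phi_s(\xi)]\,ds,\]
and letting $\epsilon\downarrow 0$ via continuity of $w$ at $1$ and the uniform integrability already established delivers the stated formula in both sup and inf form. The main obstacle in this plan is the envelope step: one must exhibit a weak subsequential limit of maximizers $\mu_{s'}\in\Theta_{s'}$ landing in $\Theta_s$ and interchange the limit with the expectation against an integrand of linear growth in $\xi$. This is precisely where $(H)$ does its work, supplying the uniform integrability needed to handle the term $\tfrac{x}{2}\phi_s'(x)$ as $\mu_{s'}\to\mu^*$.
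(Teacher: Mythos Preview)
Your proof is correct and follows the same architecture as the paper's: define $w(s)=\mathcal{N}[\phi_s]$, identify the endpoints, establish the one-sided derivative formulas via the envelope (Danskin) principle, use the interior regularity of $v$ to get local Lipschitz continuity of $w$ so that $\partial_s^+w=\partial_s^-w$ a.e., and integrate. The only difference is packaging: the paper outsources the envelope computation $\partial_s^{\pm}w(s)=\frac{1}{1-s}\sup/\inf_{\mu_s\in\Theta_s}E_{\mu_s}[\mathcal{L}_G\phi_s(\xi)]$ to Lemma~2.4 of Hu--Peng--Song (2017), whereas you reprove that step in situ via the sandwich inequality and weak-compactness limit, which makes your argument more self-contained but otherwise identical in substance.
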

\begin{proof} Set $w(s)=\mathbb{E}[v( \sqrt{1-s}\xi,s)]$. Then $w(1)=\mathcal{N}_G[\phi]$ and $w(0)=\mathcal{N}[\phi]$.
By Lemma 2.4 in Hu, Peng and Song (2017), we have, for $s\in (0,1)$,
\begin{eqnarray*}
\partial_s^+w(s):&=&\lim_{\delta\rightarrow0+}\frac{w(s+\delta)-w(s)}{\delta}\\
&=&\frac{1}{1-s}\sup_{\mu_s\in\Theta_s}E_{\mu_s}[\mathcal{L}_G\phi_s(\xi)]
\end{eqnarray*}
and
\begin{eqnarray*}
\partial_s^-w(s):&=&\lim_{\delta\rightarrow0+}\frac{w(s-\delta)-w(s)}{-\delta}\\
&=&\frac{1}{1-s}\inf_{\mu_s\in\Theta_s}E_{\mu_s}[\mathcal{L}_G\phi_s(\xi)].
\end {eqnarray*}

Noting
that $w$ is continuous on $[0,1]$ and locally Lipschitz continuous on $(0,1)$ by the regularity properties of the solution $v$ of the $G$-heat equation, we have $w'(s)=\partial_s^+w(s)=\partial_s^-w(s)$  for a.e. $s\in(0,1)$ and consequently \[w(1)-w(0)=\int_0^1\partial_s^+w(s)ds=\int_0^1\partial_s^-w(s)ds.\]
\end{proof}
\begin {remark} \label {remark-SteinE} Suppose that $G(a)=\frac{1}{2}\mathcal{N}_G[ax^2]=\frac{1}{2}\sigma^2 a$ is linear, i.e., $\mathcal{N}_G=N(0,\sigma^2)$, and that $\mathcal{N}$ is a linear expectation, i.e., $\Theta=\{\mu\}$ is a singleton.  Then (\ref{SteinEquation}) can be rewritten as
\begin{eqnarray*}E[\phi(Z)]-E_\mu[\phi(\xi)]=E_\mu[\int_0^1\frac{1}{1-s}\bigg(\frac{\sigma^2}{2}\phi''_s(\xi)-\frac{\xi}{2}\phi'_s(\xi)\bigg)ds]=E_\mu[\frac{\sigma^2}{2}g''(\xi)-\frac{\xi}{2}g'(\xi)],
\end{eqnarray*} where $g(x)=\int_0^1\frac{1}{1-s}\phi_s(x)ds$ and $Z\sim N(0,\sigma^2)$ under $E$. Since this equality holds for any distribution $\mu$, we have, by choosing $\mu=\delta_x$,
\[E[\phi(Z)]-\phi(x)=\frac{\sigma^2}{2}g''(x)-\frac{x}{2}g'(x), \ x\in \mathbb{R},\] which is just the classical Stein Equation. Equ. (\ref{SteinEquation}) will be used as a substitute of the Stein equation under  sublinear expectations.
\end {remark}

The next Lemma gives an estimate of the expectations on the right hand of  Equ. (\ref{SteinEquation}).

\begin {lemma} \label {SteinEstimate} Let $\alpha\in(0,1)$. Suppose $\mathbb{E}[\xi]=\mathbb{E}[-\xi]=0$ and $\mathbb{E}[|\xi|^{2+\alpha}]<\infty$. For $\phi\in C_b^{2,\alpha}(\mathbb{R})$ and $\mu\in\Theta_{\phi}$, we have
\[\bigg|E_{\mu}[\frac{\xi}{2}\phi'(\xi)-G(\phi''(\xi))]\bigg|\le 2 [\phi'']_{\alpha} \mathbb{E}[|\xi|^{2+\alpha}],\] where $G(a)=\frac{1}{2}\mathbb{E}[a|\xi|^2]$, $a\in \mathbb{R}$.
\end {lemma}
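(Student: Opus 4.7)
The plan is to reduce the estimate to the behavior of $\phi$ at the single point $0$ and then bring in the maximality $E_\mu[\phi(\xi)]=\mathbb{E}[\phi(\xi)]$, together with the moment information $E_\mu[\xi]=0$ and $E_\mu[\xi^2]\in[\underline{\sigma}^2,\overline{\sigma}^2]$ that every $\mu\in\Theta$ inherits from $\mathbb{E}[\xi]=\mathbb{E}[-\xi]=0$ and from the definition of $\overline{\sigma}^2,\underline{\sigma}^2$. The argument is driven by two second-order Taylor expansions at the origin,
\[\phi(\xi)=\phi(0)+\xi\phi'(0)+\tfrac{\xi^2}{2}\phi''(0)+r_2(\xi),\qquad \xi\phi'(\xi)=\xi\phi'(0)+\xi^2\phi''(0)+R_1(\xi),\]
with H\"older remainders satisfying $|r_2(\xi)|\le\tfrac{[\phi'']_\alpha|\xi|^{2+\alpha}}{(1+\alpha)(2+\alpha)}$ and $|R_1(\xi)|\le\tfrac{[\phi'']_\alpha|\xi|^{2+\alpha}}{1+\alpha}$.

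Applying $E_\mu$ to the second expansion and using $E_\mu[\xi]=0$ gives $E_\mu[\xi\phi'(\xi)]=\phi''(0)E_\mu[\xi^2]+E_\mu[R_1(\xi)]$ with the remainder controlled by $\tfrac{[\phi'']_\alpha}{1+\alpha}\mathbb{E}[|\xi|^{2+\alpha}]$. Independently, the Lipschitz bound $|G(a)-G(b)|\le\tfrac{\overline{\sigma}^2}{2}|a-b|$ paired with $|\phi''(\xi)-\phi''(0)|\le[\phi'']_\alpha|\xi|^\alpha$ yields $|E_\mu[G(\phi''(\xi))]-G(\phi''(0))|\le\tfrac{\overline{\sigma}^2}{2}[\phi'']_\alpha E_\mu[|\xi|^\alpha]$; I would then fold this into the target form via H\"older's inequality $E_\mu[|\xi|^\alpha]\le\overline{\sigma}^\alpha$ and the sublinear power-mean inequality $\overline{\sigma}^{2+\alpha}\le\mathbb{E}[|\xi|^{2+\alpha}]$, which follows by supping $E_\nu[|\xi|^2]^{1/2}\le E_\nu[|\xi|^{2+\alpha}]^{1/(2+\alpha)}$ over $\nu\in\Theta$.

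The core step is to control $\tfrac{\phi''(0)}{2}E_\mu[\xi^2]-G(\phi''(0))$, and this is precisely where the maximality of $\mu$ enters. Applying $E_\mu$ to the first expansion and using $E_\mu[\xi]=0$ gives $E_\mu[\phi(\xi)]=\phi(0)+\tfrac{\phi''(0)}{2}E_\mu[\xi^2]+E_\mu[r_2(\xi)]$. Applying $\mathbb{E}$ to the same expansion demands a small sublinear manoeuvre: since $A:=\xi\phi'(0)$ satisfies $\mathbb{E}[A]=\mathbb{E}[-A]=0$, subadditivity and its dual force $\mathbb{E}[A+B]=\mathbb{E}[B]$ for every $B$. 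Combined with $\mathbb{E}[\tfrac{\xi^2}{2}\phi''(0)]=G(\phi''(0))$ (the very definition of $G$) and the estimate $|\mathbb{E}[B+r_2(\xi)]-\mathbb{E}[B]|\le\mathbb{E}[|r_2(\xi)|]$, this produces $|\mathbb{E}[\phi(\xi)]-\phi(0)-G(\phi''(0))|\le\mathbb{E}[|r_2(\xi)|]$. Equating the two expressions via $E_\mu[\phi(\xi)]=\mathbb{E}[\phi(\xi)]$ and bounding $|E_\mu[r_2(\xi)]|\le\mathbb{E}[|r_2(\xi)|]$ delivers
\[\Bigl|\tfrac{\phi''(0)}{2}E_\mu[\xi^2]-G(\phi''(0))\Bigr|\le 2\mathbb{E}[|r_2(\xi)|]\le\tfrac{2[\phi'']_\alpha}{(1+\alpha)(2+\alpha)}\mathbb{E}[|\xi|^{2+\alpha}].\]

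Assembling the three contributions gives an overall constant bounded by $\tfrac{2}{(1+\alpha)(2+\alpha)}+\tfrac{1}{2(1+\alpha)}+\tfrac{1}{2}$, which does not exceed $2$ for $\alpha\in(0,1)$, matching the statement. The main obstacle I anticipate is the sublinear manipulation in the third paragraph: without the cancellation $\mathbb{E}[A+B]=\mathbb{E}[B]$ for central $A$, the linear term $\xi\phi'(0)$ in the Taylor expansion would not decouple from $\mathbb{E}$, and no clean identification of $G(\phi''(0))$ with $\mathbb{E}[\phi(\xi)]-\phi(0)$ up to a controlled error would be available; the rest is routine H\"older bookkeeping.
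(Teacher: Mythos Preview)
Your proposal is correct and follows essentially the same route as the paper: Taylor expand $\phi$, $\phi'$, $\phi''$ at $0$, use the maximality $E_\mu[\phi(\xi)]=\mathbb{E}[\phi(\xi)]$ together with the sublinear cancellation of the centred linear term to pin down $\bigl|\tfrac{\phi''(0)}{2}E_\mu[\xi^2]-G(\phi''(0))\bigr|$, and then combine with the Lipschitz bound on $G$ and a H\"older/moment inequality for the $|\xi|^\alpha$ piece. The only cosmetic differences are that you carry the sharper integral-form remainder constants $\tfrac{1}{(1+\alpha)(2+\alpha)}$ and $\tfrac{1}{1+\alpha}$ (the paper uses the cruder $\tfrac{1}{2}$ and $1$), and you bound $\overline{\sigma}^2E_\mu[|\xi|^\alpha]$ via $\overline{\sigma}^{2+\alpha}\le\mathbb{E}[|\xi|^{2+\alpha}]$ whereas the paper uses $\mathbb{E}[|\xi|^2]\,\mathbb{E}[|\xi|^\alpha]\le\mathbb{E}[|\xi|^{2+\alpha}]$; both lead to the same final constant $2$.
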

\begin {proof} Taylor's  formula gives
\begin{eqnarray}
\label {Taylor1} \phi(\xi)&=&\phi(0)+\phi'(0)\xi+\frac{1}{2}\phi''(0)|\xi|^2+R_{\xi},\\
\label {Taylor2} \phi'(\xi)&=&\phi'(0)+\phi''(0)\xi+R'_{\xi},\\
\label {Taylor3} \phi''(\xi)&=&\phi''(0)+R''_{\xi},
\end{eqnarray}
 with $|R_{\xi}|\le\frac{1}{2}[\phi'']_{\alpha} |\xi|^{2+\alpha},$  $|R'_{\xi}|\le[\phi'']_{\alpha} |\xi|^{1+\alpha}$ and $|R''_{\xi}|\le[\phi'']_{\alpha} |\xi|^{\alpha}.$

  Set $A:=\mathbb{E}[\phi(\xi)]=E_{\mu}[\phi(\xi)]$. Then
\begin {eqnarray*}A=\mathbb{E}[\phi(\xi)]&=&\mathbb{E}[\phi(0)+\phi'(0)\xi+\frac{1}{2}\phi''(0)|\xi|^2+R_{\xi}]\\
&\le& \phi(0)+\mathbb{E}[\frac{1}{2}\phi''(0)|\xi|^2]+\mathbb{E}[R_{\xi}]\\
&\le& \phi(0)+G(\phi''(0))+\frac{1}{2}[\phi'']_{\alpha}\mathbb{E}[|\xi|^{2+\alpha}],
\end {eqnarray*} and
\begin {eqnarray*}A=\mathbb{E}[\phi(\xi)]&=&\mathbb{E}[\phi(0)+\phi'(0)\xi+\frac{1}{2}\phi''(0)|\xi|^2+R_{\xi}]\\
&\ge& \phi(0)+\mathbb{E}[\frac{1}{2}\phi''(0)|\xi|^2]-\mathbb{E}[-R_{\xi}]\\
&\ge& \phi(0)+G(\phi''(0))-\frac{1}{2}[\phi'']_{\alpha}\mathbb{E}[|\xi|^{2+\alpha}].
\end {eqnarray*}
Therefore, \[\bigg|A-\phi(0)-G(\phi''(0))\bigg|\le\frac{1}{2}[\phi'']_{\alpha}\mathbb{E}[|\xi|^{2+\alpha}].\]
Noting that $A=E_{\mu}[\phi(\xi)]=\phi(0)+\frac{1}{2}\phi''(0)E_{\mu}[|\xi|^2]+E_{\mu}[R_{\xi}]$, we have
\begin {eqnarray}
\label {estimate1}
\bigg|\frac{1}{2}\phi''(0)E_{\mu}[|\xi|^2]-G(\phi''(0))\bigg|= \bigg|A-\phi(0)-E_{\mu}[R_{\xi}]-G(\phi''(0))\bigg|\le [\phi'']_{\alpha}\mathbb{E}[|\xi|^{2+\alpha}].
\end {eqnarray}
Now let us compute the expectation $E_{\mu}[\frac{\xi}{2}\phi'(\xi)-G(\phi''(\xi))]$. By (\ref {Taylor2}) and (\ref{Taylor3}), we have
\begin {eqnarray*}
& &\frac{\xi}{2}\phi'(\xi)-G(\phi''(\xi))\\
&=&\frac{\xi}{2}(\phi'(0)+\phi''(0)\xi+R'_{\xi})-G(\phi''(0)+R''_{\xi})\\
&=&\frac{\xi}{2}\phi'(0)+[\frac{1}{2}\phi''(0)|\xi|^2-G(\phi''(0))]+[G(\phi''(0))-G(\phi''(0)+R''_{\xi})]+\frac{\xi}{2} R'_{\xi}.
\end {eqnarray*} So, by (\ref{estimate1}),
\begin {eqnarray*}
& &\bigg|E_{\mu}[\frac{\xi}{2}\phi'(\xi)-G(\phi''(\xi))]\bigg|\\
&=&\bigg|E_{\mu}[\frac{1}{2}\phi''(0)|\xi|^2-G(\phi''(0))]+E_{\mu}[G(\phi''(0))-G(\phi''(0)+R''_{\xi})]+\frac{1}{2}E_{\mu}[\xi R'_{\xi}]\bigg|\\
&\le&[\phi'']_{\alpha}\mathbb{E}[|\xi|^{2+\alpha}]+\frac{1}{2}[\phi'']_{\alpha}\overline{\sigma}^2\mathbb{E}[|\xi|^{\alpha}]+\frac{1}{2}[\phi'']_{\alpha}\mathbb{E}[|\xi|^{2+\alpha}]\\
&\le&2[\phi'']_{\alpha}\mathbb{E}[|\xi|^{2+\alpha}].
\end {eqnarray*}
The last inequality holds since
\[\mathbb{E}[|\xi|^2]\mathbb{E}[|\xi|^{\alpha}]\le\big(\mathbb{E}[|\xi|^{2\times\frac{2+\alpha}{2}}]\big)^{\frac{2}{2+\alpha}}\big(\mathbb{E}[|\xi|^{\alpha\times\frac{2+\alpha}{\alpha}}]\big)^{\frac{\alpha}{2+\alpha}}=\mathbb{E}[|\xi|^{2+\alpha}].\]
\end {proof}
\begin{remark}
We emphasize that the function $G$ in Lemma \ref{lemma-SteinEquation} is determined by the variances of $\mathcal{N}_G$ and that the function $G$ in Lemma \ref{SteinEstimate} is determined by the variances of $\mathcal{N}$. In other words, to estimate $\mathcal{N}_G[\phi]-\mathcal{N}[\phi]$ applying these two lemmas requires that  $\mathcal{N}_G$ and $\mathcal{N}$ have the same variances.
\end{remark}

With these preparations, we are now ready to prove the convergence rate of Peng's Central Limit Theorem under sublinear expectations.

\begin{theorem} \label {Rate-PengCLT} Let $(X_i)_{i\ge 1}$ be a sequence of i.i.d random variables  under a sublinear expectation $\hat{\mathbf{E}}$ with $\hat{\mathbf{E}}[X_1]=\hat{\mathbf{E}}[-X_1]=0$ and $\hat{\mathbf{E}}[X_1^2]=\overline{\sigma}^2\ge-\hat{\mathbf{E}}[-X_1^2]=\underline{\sigma}^2>0$. Setting $W_n:=\frac{X_1+\cdots+X_n}{\sqrt{n}}$, then there exists $\alpha\in(0,1)$  depending  on $\underline{\sigma}$ and $\overline{\sigma}$, and a positive constant $C_{\alpha, G}$ depending  on $\alpha, \underline{\sigma}$ and $\overline{\sigma}$ such that
\[\sup_{|\varphi|_{Lip}\le1}\bigg|\hat{\mathbf{E}}[\varphi(W_n)]-\mathcal{N}_G(\varphi)\bigg|\leq C_{\alpha,G}\frac{\hat{\mathbf{E}}[|X_1|^{2+\alpha}]}{n^{\frac{\alpha}{2}}}ïŒ,\] where $\mathcal{N}_G$ is the $G$-normal distribution with $G(a)=\frac{1}{2}\hat{\mathbf{E}}[aX_1^2]$.
\end{theorem}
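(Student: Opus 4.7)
The plan is to execute the three-step program outlined in the introduction, taking as comparison sublinear expectation $\mathcal{N}[\varphi] := \hat{\mathbf{E}}[\varphi(W_n)]$.

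First, I verify the variance-matching hypothesis that lets us combine Lemmas~\ref{lemma-SteinEquation} and~\ref{SteinEstimate}. One needs $\hat{\mathbf{E}}[W_n] = \hat{\mathbf{E}}[-W_n] = 0$, $\hat{\mathbf{E}}[W_n^2] = \overline{\sigma}^2$, and $-\hat{\mathbf{E}}[-W_n^2] = \underline{\sigma}^2$. These follow by induction on $n$ from i.i.d., the definition of independence, and the identity $\hat{\mathbf{E}}[Z + aY] = \hat{\mathbf{E}}[Z]$ valid whenever $\hat{\mathbf{E}}[\pm aY] = 0$, which is a routine consequence of sub-additivity applied in both directions. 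This ensures the $G$ produced by the variances of $\mathcal{N}$ matches the $G$ of $\mathcal{N}_G$, a prerequisite for simultaneous use of Steps 1 and 2.

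Next I combine the ingredients. Lemma~\ref{lemma-SteinEquation} (Step 1) gives
\[
\mathcal{N}_G[\varphi] - \hat{\mathbf{E}}[\varphi(W_n)] = \int_0^1 \frac{1}{1-s}\sup_{\mu_s\in\Theta_s} E_{\mu_s}[\mathcal{L}_G\phi_s(\xi)]\,ds,
\]
with $\phi_s(x) = v(\sqrt{1-s}x, s)$ and $v$ the solution of the $G$-heat equation with initial value $\varphi$. Lemma~\ref{SteinEstimate} (Step 2) bounds the integrand by $2[\phi_s'']_\alpha \hat{\mathbf{E}}[|W_n|^{2+\alpha}]$. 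The chain rule gives $[\phi_s'']_\alpha = (1-s)^{1+\alpha/2}[v_{xx}(\cdot, s)]_\alpha$, and the Corollary to Theorem~\ref{IRE-Gequation} (Step 3) supplies $[v_{xx}(\cdot, s)]_\alpha \le c_{\alpha,G}\, s^{-(1+\alpha)/2}$ when $|\varphi|_{\mathrm{Lip}} \le 1$. Thus $\frac{1}{1-s}[\phi_s'']_\alpha \le c_{\alpha,G}(1-s)^{\alpha/2} s^{-(1+\alpha)/2}$, which is integrable on $(0,1)$ for $\alpha<1$; the Beta integral $\int_0^1 (1-s)^{\alpha/2} s^{-(1+\alpha)/2}\,ds$ behaves like $2/(1-\alpha)$ near $\alpha = 1$, accounting for the claimed prefactor $C_{\alpha,G} = \frac{4}{1-\alpha} c_{\alpha,G}$.

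The crux—and what I anticipate as the main obstacle—is extracting the factor $n^{-\alpha/2}$ rather than merely a bound proportional to $\hat{\mathbf{E}}[|W_n|^{2+\alpha}]$, which is only $O(1)$ in general. To resolve this, the three-step scheme must be applied summand-by-summand: introduce the heat-smoothed interpolation $F_k := \hat{\mathbf{E}}[u((X_1+\cdots+X_k)/\sqrt{n},\,1-k/n)]$, so that $F_0 = \mathcal{N}_G[\varphi]$ and $F_n = \hat{\mathbf{E}}[\varphi(W_n)]$. Using independence of $X_{k+1}$ from $(X_1,\ldots,X_k)$, the identity $\hat{\mathbf{E}}[Z + aY] = \hat{\mathbf{E}}[Z]$ to eliminate the linear Taylor term, and the single-summand analogue of Lemma~\ref{SteinEstimate}, each increment is bounded as $|F_{k+1}-F_k| \le 2[v_{xx}(\cdot,(n-k)/n)]_\alpha\, n^{-(2+\alpha)/2}\,\hat{\mathbf{E}}[|X_1|^{2+\alpha}]$. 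Summing over $k$ and invoking Step~3, the arising Riemann-type sum $n^{-1/2}\sum_{j=1}^{n} j^{-(1+\alpha)/2}$ is asymptotic to $\frac{2}{1-\alpha}\,n^{-\alpha/2}$, yielding the announced rate with the prefactor $\frac{4}{1-\alpha}c_{\alpha,G}$.
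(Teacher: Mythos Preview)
Your Lindeberg-type telescoping $F_k=\hat{\mathbf{E}}[u(W_{k,n},1-k/n)]$ is exactly the decomposition the paper uses, so the overall architecture is right. The gap is in how you bound each increment. Writing $b_i(x)=\hat{\mathbf{E}}[u(x+X_i/\sqrt{n},1-i/n)]$ and $c_i(x)=u(x,1-(i-1)/n)=\mathbb{E}_G[u(x+\xi/\sqrt{n},1-i/n)]$, a direct Taylor comparison of $b_i$ and $c_i$ controls $|b_i(x)-c_i(x)|$ by $[\,D_x^2u(\cdot,1-i/n)\,]_\alpha\,n^{-(2+\alpha)/2}$ times a moment. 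The regularity you actually access is therefore at time $1-i/n$, not $1-(i-1)/n$ as you wrote; at the final step $i=n$ this is time $0$, where $u(\cdot,0)=\varphi$ is merely Lipschitz and $[\varphi'']_\alpha$ is uncontrolled. Your ``single-summand analogue of Lemma~\ref{SteinEstimate}'' does not by itself produce the bound you claim, and the off-by-one shift to $(n-k)/n$ that makes your Riemann sum converge is unjustified.

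The paper fixes precisely this boundary issue by applying Lemma~\ref{lemma-SteinEquation} \emph{at each step of the telescoping}, taking $\mathcal{N}$ to be the distribution of a single $X_1$ (not of $W_n$, as in your first two paragraphs). For the increment with index $i$ one sets $\phi(y)=u(x+y/\sqrt{n},1-i/n)$, and then $\phi_s(y)=u\bigl(x+\sqrt{(1-s)/n}\,y,\ 1-\tfrac{i}{n}+\tfrac{s}{n}\bigr)$, so the time argument sweeps the interval $(1-\tfrac{i}{n},1-\tfrac{i-1}{n})$. Combining Lemma~\ref{SteinEstimate} with Theorem~\ref{IRE-Gequation} yields
\[
|b_i(x)-c_i(x)|\le \frac{2c_{\alpha,G}}{n^{\alpha/2}}\int_{1-i/n}^{1-(i-1)/n} t^{-(1+\alpha)/2}\,dt\ \hat{\mathbf{E}}[|X_1|^{2+\alpha}],
\]
which is finite even for $i=n$; summing in $i$ gives $\int_0^1 t^{-(1+\alpha)/2}\,dt=\tfrac{2}{1-\alpha}$ and hence $C_{\alpha,G}=\tfrac{4}{1-\alpha}c_{\alpha,G}$. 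In short: the inner application of Lemma~\ref{lemma-SteinEquation} is not optional smoothing---it is what rescues the last increment, and it should replace your appeal to a direct Taylor bound.
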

Here $\alpha$ is the H$\ddot{\textmd{o}}$lder exponent in Theorem \ref{IRE-Gequation}, and $C_{\alpha,G}$ can be chosen as $\frac{4 }{1-\alpha}c_{\alpha,G}$
with $c_{\alpha,G}$ the $\alpha$-H$\ddot{\textmd{o}}$lder constant in the same theorem.

\begin {proof}
Fix $n\in \mathbb{N}$. Set, for $1\le i\le n$,
\begin{eqnarray*}\xi_{i,n}=\frac{X_i}{\sqrt{n}}, \ W_{0,n}=0, \ W_{i,n}=\sum_{k=1}^i\xi_{k,n}.
\end{eqnarray*}
and, for $0\le i\le n$,
 \[A_{i,n}=\hat{\mathbf{E}}[u(W_{i,n},1-\frac{i}{n})],\] where $u(x,t)$ is the solution to the $G$-heat equation with $u(x,0)=\varphi(x)$.

Then $A_{n,n}=\hat{\mathbf{E}}[\varphi(W_n)]$, $A_{0,n}=\mathcal{N}_G[\varphi]$, and
\begin{eqnarray}
\bigg|\hat{\mathbf{E}}[\varphi(W_n)]-\mathcal{N}_G[\varphi]\bigg|&\le&\sum_{i=1}^n\big|A_{i,n}-A_{i-1,n}\big|\\
&=&\sum_{i=1}^n\bigg|\hat{\mathbf{E}}[b_{i,n}(W_{i-1,n})]-\hat{\mathbf{E}}[c_{i,n}(W_{i-1,n})]\bigg|,\\
&\le& \sum_{i=1}^n\sup_{x\in\mathbb{R}}\big|b_{i,n}(x)-c_{i,n}(x)\big|
\end{eqnarray} where $b_{i,n}(x)=\hat{\mathbf{E}}[u(x+\frac{X_i}{\sqrt{n}}, 1-\frac{i}{n})]$ and $c_{i,n}(x)=\mathbb{E}_G[u(x+\frac{\xi}{\sqrt{n}}, 1-\frac{i}{n})]$. Here and below we write $\mathbb{E}_G[\phi(\xi)]$ for $\mathcal{N}_G[\phi]$.

Let us now compute $b_{i,n}(x)-c_{i,n}(x)$.

Set $\phi(y):=\phi_{x,i,n}(y)=u(x+\frac{y}{\sqrt{n}}, 1-\frac{i}{n})$. Then $c_{i,n}(x)=\mathcal{N}_G[\phi]$ and $b_{i,n}(x)=\hat{\mathbf{ E}}[\phi(X_1)]$. The latter, as a sublinear expectation on $C_{b,Lip}(\mathbb{R})$, can be represented as \[\hat{\mathbf{ E}}[\phi(X_1)]=\sup_{\mu\in\Theta}\mu[\phi],\] where $\Theta$ is a weakly compact subset of probabilities on $\mathbb{R}$. In the sequel, we employ the notations in Lemma \ref{lemma-SteinEquation}.
By this lemma, we have
\[c_{i,n}(x)-b_{i,n}(x)=\int_0^1\frac{1}{1-s}\sup_{\mu_s\in\Theta_s}\mu_s[\mathcal{L}_G\phi_s]ds=\int_0^1\frac{1}{1-s}\inf_{\mu_s\in\Theta_s}\mu_s[\mathcal{L}_G\phi_s]ds,\]
where \begin{eqnarray*}
\phi_s(y)&=&\mathbb{E}_G[\phi(\sqrt{1-s}y+\sqrt{s}\xi)]\\
&=&\mathbb{E}_G[u(x+\sqrt{\frac{1-s}{n}}y+\sqrt{\frac{s}{n}}\xi,1-\frac{i}{n})]\\
&=&u(x+\sqrt{\frac{1-s}{n}}y,1-\frac{i}{n}+\frac{s}{n}).
\end{eqnarray*}
Therefore
\begin{eqnarray*}[D^2_y\phi_s]_{\alpha}&=&(\frac{1-s}{n})^{1+\frac{\alpha}{2}}[D^2_xu(\cdot, 1-\frac{i}{n}+\frac{s}{n})]_{\alpha}\\
&\le& c_{\alpha,G}(\frac{1-s}{n})^{1+\frac{\alpha}{2}}(1-\frac{i}{n}+\frac{s}{n})^{-(\frac{1}{2}+\frac{\alpha}{2})}.
\end {eqnarray*} Now Lemma \ref{SteinEstimate} gives
\begin {eqnarray*}\bigg|b_{i,n}(x)-c_{i,n}(x)\bigg|&\le& \int_0^1\frac{2}{1-s} [D^2_y\phi_s]_{\alpha}ds \times \hat{\mathbf{E}}[|X_1|^{2+\alpha}] \\
&\le& \frac{2c_{\alpha,G}}{n}\int_0^1(\frac{1-s}{n})^{\frac{\alpha}{2}}(1-\frac{i}{n}+\frac{s}{n})^{-(\frac{1}{2}+\frac{\alpha}{2})}ds\times \hat{\mathbf{E}}[|X_1|^{2+\alpha}]\\
&\le&\frac{2c_{\alpha,G}}{n^{1+\frac{\alpha}{2}}}\int_0^1(1-\frac{i}{n}+\frac{s}{n})^{-(\frac{1}{2}+\frac{\alpha}{2})}ds\times \hat{\mathbf{E}}[|X_1|^{2+\alpha}]\\
&=&\frac{2c_{\alpha,G}}{n^{\frac{\alpha}{2}}}\int_{1-\frac{i}{n}}^{1-\frac{i-1}{n}}s^{-(\frac{1}{2}+\frac{\alpha}{2})}ds\times \hat{\mathbf{E}}[|X_1|^{2+\alpha}].
\end {eqnarray*}
Hence,
\begin {eqnarray*}
\bigg|\hat{\mathbf{E}}[\varphi(W_n)]-\mathcal{N}_G[\varphi]\bigg|&\leq&\sum_{i=1}^n\sup_{x\in\mathbb{R}}\big|b_{i,n}(x)-c_{i,n}(x)\big|\\
&\le&\frac{2c_{\alpha,G}}{n^{\frac{\alpha}{2}}}\int_{0}^{1}s^{-(\frac{1}{2}+\frac{\alpha}{2})}ds\times \hat{\mathbf{E}}[|X_1|^{2+\alpha}]\\
&=&\frac{4c_{\alpha,G} }{1-\alpha}\frac{\hat{\mathbf{E}}[|X_1|^{2+\alpha}]}{n^{\frac{\alpha}{2}}}.
\end {eqnarray*}
\end {proof}

\section {Non-identically Distributed Case }

In this section, we consider the normal approximation for an independent but not necessarily identically distributed sequence of random variables. To do so, we first introduce some notations. For a random variable $X$ in a sublinear expectation space with $\overline{\sigma}^2:=\hat{\mathbf{ E}}[X^2]\ge -\hat{\mathbf{ E}}[-X^2]=:\underline{\sigma}^2>0$, set $\beta:=\frac{\overline{\sigma}}{\underline{\sigma}}$ and $\sigma:=\frac{\overline{\sigma}+\underline{\sigma}}{2}$. Now we can use $\beta, \sigma$ to characterize the variances of a  random variable $X$ in a sublinear expectation space. For example, we shall write $\mathcal{N}_\beta(0,\sigma^2)$ for the $G$-normal distribution $\mathcal{N}_G$, and write $\mathcal{N}_\beta$ for $\mathcal{N}_\beta(0,1)$. Clearly, $\mathcal{N}_1(0,\sigma^2)=N(0,\sigma^2)$, the classical normal distribution.

In this section, we shall fix the ratio $\beta\ge 1$ of variances  as a constant and call $\sigma^2$ the variance. We write $G_\beta$ the function $G$ with $\underline{\sigma}=\frac{2}{1+\beta}$ and $\overline{\sigma}=\frac{2\beta}{1+\beta}$. So the $G_\beta$-normal distribution is $\mathcal{N}_\beta$.

\begin{theorem}  Let $(\xi_i)_{1\le i\le n}$ be a sequence of independent random variables  under a sublinear expectation $\hat{\mathbf{E}}$. We suppose further that, for each $1\le i\le n$, $\xi_i$ has finite variance $\sigma_i^2$ and mean 0, i.e.,
$\hat{\mathbf{E}}[\xi_i]=\hat{\mathbf{E}}[-\xi_i]=0$. Setting $W:=\xi_1+\cdots+\xi_n$ and $\sigma^2:=\Sigma_{i=1}^n\sigma_i^2$, then there exists $\alpha\in(0,1)$  depending  on $\beta$, and a positive constant $C_{\alpha, \beta}$ depending  on $\alpha, \beta$ such that
\[\sup_{|\varphi|_{Lip}\le1}\bigg|\hat{\mathbf{E}}[\varphi(\frac{W}{\sigma})]-\mathcal{N}_\beta(\varphi)\bigg|\leq C_{\alpha,\beta}\sup_{1\le i\le n}\bigg\{\frac{\hat{\mathbf{E}}[|\xi_i|^{2+\alpha}]}{\sigma_i^{2+\alpha}}(\frac{\sigma_i}{\sigma})^\alpha\bigg\}.\]
\end{theorem}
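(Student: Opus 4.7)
The plan is to run the same Lindeberg-swapping argument as in Theorem \ref{Rate-PengCLT}, but with two adjustments: (i) the swap is performed one $\xi_i$ at a time with step-size $s_i:=\sigma_i^2/\sigma^2$ rather than $1/n$, and (ii) the random summand $\xi_i/\sigma$ must first be rescaled to $\tilde\xi_i:=\xi_i/\sigma_i$ so that its upper/lower variances match those of $\mathcal{N}_\beta$, allowing Lemma \ref{SteinEstimate} to apply. Note $\xi_i/\sigma=\sqrt{s_i}\,\tilde\xi_i$ and $\sum_i s_i=1$.

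Let $u(x,t)$ be the classical solution of the $G_\beta$-heat equation with initial datum $\varphi$ (so $u(0,1)=\mathcal{N}_\beta(\varphi)$), and set $S_i:=(\xi_1+\cdots+\xi_i)/\sigma$, $t_i:=s_1+\cdots+s_i$, and
\[A_i:=\hat{\mathbf{E}}[u(S_i,1-t_i)],\qquad 0\le i\le n.\]
Then $A_n=\hat{\mathbf{E}}[\varphi(W/\sigma)]$, $A_0=\mathcal{N}_\beta(\varphi)$, and I would telescope $|A_n-A_0|\le\sum_{i=1}^n|A_i-A_{i-1}|$. Using the independence of $\xi_i$ from $(\xi_1,\dots,\xi_{i-1})$, one writes $A_i=\hat{\mathbf{E}}[\phi_i(S_{i-1})]$ with $\phi_i(x)=\hat{\mathbf{E}}[u(x+\xi_i/\sigma,1-t_i)]$; the semigroup property of the $G_\beta$-heat equation (applied at step-size $s_i$) rewrites $A_{i-1}=\hat{\mathbf{E}}[\hat\phi_i(S_{i-1})]$ with $\hat\phi_i(x)=\mathcal{N}_\beta[u(x+\sqrt{s_i}\cdot,1-t_i)]$. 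Therefore $|A_i-A_{i-1}|\le\sup_x|\phi_i(x)-\hat\phi_i(x)|$.

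The pointwise difference is exactly a Stein-type comparison between a distribution (that of $\tilde\xi_i$) and $\mathcal{N}_\beta$, since both have the same variance pair $(2/(1+\beta),2\beta/(1+\beta))$. Putting $\psi(y):=u(x+\sqrt{s_i}\,y,1-t_i)$, Lemma \ref{lemma-SteinEquation} and Lemma \ref{SteinEstimate} yield
\[|\phi_i(x)-\hat\phi_i(x)|\le 2\,\hat{\mathbf{E}}[|\tilde\xi_i|^{2+\alpha}]\int_0^1\frac{[\psi_s'']_\alpha}{1-s}\,ds.\]
To estimate $[\psi_s'']_\alpha$ I observe that $v(y,t):=u(x+\sqrt{s_i}\,y,1-t_i+s_i t)$ solves the $G_\beta$-heat equation with initial value $\psi$, so $\psi_s(y)=v(\sqrt{1-s}\,y,s)$ and
\[[\psi_s'']_\alpha=(s_i(1-s))^{1+\alpha/2}[D_x^2 u(\cdot,1-t_i+s_i s)]_\alpha\le c_{\alpha,G_\beta}\,(s_i(1-s))^{1+\alpha/2}(1-t_i+s_i s)^{-\frac{1+\alpha}{2}},\]
using Theorem \ref{IRE-Gequation} together with $\|D_x u\|_\infty\le|\varphi|_{\mathrm{Lip}}\le 1$.

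Summing, using $\hat{\mathbf{E}}[|\tilde\xi_i|^{2+\alpha}]s_i^{1+\alpha/2}=s_i\cdot(\sigma_i/\sigma)^\alpha\cdot\hat{\mathbf{E}}[|\xi_i|^{2+\alpha}]/\sigma_i^{2+\alpha}$ and the change of variables $u=1-t_i+s_i s$, the time integrals telescope across $i$:
\[\sum_{i=1}^n s_i\int_0^1(1-t_i+s_i s)^{-\frac{1+\alpha}{2}}ds=\int_0^1 u^{-\frac{1+\alpha}{2}}du=\frac{2}{1-\alpha},\]
which, bounding $(1-s)^{\alpha/2}\le 1$ and factoring out the supremum in $i$, delivers the stated bound with $C_{\alpha,\beta}=4c_{\alpha,G_\beta}/(1-\alpha)$. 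The main subtlety — and really the only non-routine point — is ensuring the correct matching of variance structures at the Stein step: one must rescale each $\xi_i$ by $\sigma_i$ (not by $\sigma$) so that Lemma \ref{SteinEstimate} applies, while simultaneously exploiting the scaling of the $G_\beta$-heat equation to absorb the resulting $\sqrt{s_i}$ into the ``time increment'' of size $s_i$. Everything else is bookkeeping.
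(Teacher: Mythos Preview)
Your proposal is correct and follows essentially the same approach as the paper: the same non-uniform time grid $t_i=\sum_{k\le i}\sigma_k^2/\sigma^2$, the same Lindeberg telescoping through $A_i=\hat{\mathbf{E}}[u(S_i,1-t_i)]$, the same rescaling $\tilde\xi_i=\xi_i/\sigma_i$ to match the variance pair of $\mathcal{N}_\beta$ before invoking Lemmas \ref{lemma-SteinEquation} and \ref{SteinEstimate}, and the same regularity bound leading to the constant $C_{\alpha,\beta}=4c_{\alpha,G_\beta}/(1-\alpha)$. The only differences are notational.
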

Here $\alpha$ is the H$\ddot{\textmd{o}}$lder exponent in Theorem \ref{IRE-Gequation}, and $C_{\alpha,\beta}$ can be chosen as $\frac{4 }{1-\alpha}c_{\alpha,G_\beta}$
with $c_{\alpha,G_\beta}$ the $\alpha$-H$\ddot{\textmd{o}}$lder constant in the same theorem.

\begin {proof} The proof is adapted from that of Theorem \ref {Rate-PengCLT}.

Set, for $1\le i\le n$,
\begin{eqnarray*} t_0=0, \ t_i=\frac{\Sigma_{k=1}^i\sigma_k^2}{\sigma^2}, \ W_0=0,  \ W_{i}=\sum_{k=1}^i\frac{\xi_{k}}{\sigma}.
\end{eqnarray*}
and, for $0\le i\le n$,
 \[A_{i}=\hat{\mathbf{E}}[u(W_{i},1-t_i)],\] where $u(x,t)$ is the solution to the standard $G_\beta$-heat equation with $u(x,0)=\varphi(x)$.

Then $A_{n}=\hat{\mathbf{E}}[\varphi(W_n)]$, $A_{0}=\mathcal{N}_\beta[\varphi]$, and
\begin{eqnarray}
\bigg|\hat{\mathbf{E}}[\varphi(W_n)]-\mathcal{N}_\beta[\varphi]\bigg|&\le&\sum_{i=1}^n\big|A_{i}-A_{i-1}\big|\\
&=&\sum_{i=1}^n\bigg|\hat{\mathbf{E}}[b_{i}(W_{i-1})]-\hat{\mathbf{E}}[c_{i}(W_{i-1})]\bigg|,\\
&\le& \sum_{i=1}^n\sup_{x\in\mathbb{R}}\big|b_{i}(x)-c_{i}(x)\big|
\end{eqnarray} where $b_{i}(x)=\hat{\mathbf{E}}[u(x+\frac{\xi_i}{\sigma}, 1-t_i)]$ and $c_{i}(x)=\mathbb{E}_{\beta}[u(x+\frac{\sigma_i\xi}{\sigma}, 1-t_i)]$. Here and below we write $\mathbb{E}_{\beta}[\phi(\xi)]$ for $\mathcal{N}_{\beta}[\phi]$.

Let us now compute $b_{i}(x)-c_{i}(x)$.

Set $\phi(y):=\phi_{x,i}(y)=u(x+\frac{\sigma_iy}{\sigma}, 1-t_i)$. Then $c_i(x)=\mathcal{N}_\beta[\phi]$ and $b_i(x)=\hat{\mathbf{ E}}[\phi(\frac{\xi_i}{\sigma_i})]$. The latter, as a sublinear expectation on $C_{b,Lip}(\mathbb{R})$, can be represented as \[\hat{\mathbf{ E}}[\phi(\frac{\xi_i}{\sigma_i})]=\sup_{\mu\in\Theta}\mu[\phi],\] where $\Theta$ is a weakly compact subset of probabilities on $\mathbb{R}$.  In the sequel, we employ the notations in Lemma \ref{lemma-SteinEquation}.
By this lemma, we have
\[c_{i}(x)-b_{i}(x)=\int_0^1\frac{1}{1-s}\sup_{\mu_s\in\Theta_s}\mu_s[\mathcal{L}_{G_\beta}\phi_s]ds=\int_0^1\frac{1}{1-s}\inf_{\mu_s\in\Theta_s}\mu_s[\mathcal{L}_{G_\beta}\phi_s]ds,\]
where \begin{eqnarray*}
\phi_s(y)&=&\mathbb{E}_\beta[\phi(\sqrt{1-s}y+\sqrt{s}\xi)]\\
&=&\mathbb{E}_\beta[u(x+\sqrt{1-s}\frac{\sigma_i}{\sigma}y+\sqrt{s}\frac{\sigma_i}{\sigma}\xi,1-t_i)]\\
&=&u(x+\sqrt{1-s}\frac{\sigma_i}{\sigma}y,1-t_i+s\frac{\sigma_i^2}{\sigma^2}).
\end{eqnarray*}
Therefore
\begin{eqnarray*}[D^2_y\phi_s]_{\alpha}&=&(\frac{\sigma_i^2}{\sigma^2}(1-s))^{1+\frac{\alpha}{2}}[D^2_xu(\cdot, 1-t_i+s\frac{\sigma_i^2}{\sigma^2})]_\alpha\\
&\le& c_{\alpha,G_\beta}(\frac{\sigma_i^2}{\sigma^2}(1-s))^{1+\frac{\alpha}{2}}(1-t_i+s\frac{\sigma_i^2}{\sigma^2})^{-(\frac{1}{2}+\frac{\alpha}{2})}.
\end {eqnarray*} Now Lemma \ref{SteinEstimate} gives, noting that $G_\beta(a)=\frac{1}{2}\hat{\mathbf{ E}}[a(\frac{\xi_i}{\sigma_i})^2]$,
\begin {eqnarray*}\bigg|b_{i,n}(x)-c_{i,n}(x)\bigg|&\le& \int_0^1\frac{2}{1-s} [D^2_y\phi_s]_{\alpha}ds \times \frac{\hat{\mathbf{E}}[|\xi_i|^{2+\alpha}]}{\sigma_i^{2+\alpha}} \\
&\le& 2c_{\alpha,G_\beta}\frac{\sigma_i^2}{\sigma^2}\int_0^1(\frac{\sigma_i^2}{\sigma^2}(1-s))^{\frac{\alpha}{2}}(1-t_i+\frac{\sigma_i^2}{\sigma^2}s)^{-(\frac{1}{2}+\frac{\alpha}{2})}ds\times \frac{\hat{\mathbf{E}}[|\xi_i|^{2+\alpha}]}{\sigma_i^{2+\alpha}}\\
&\le&2c_{\alpha,G_\beta}(\frac{\sigma_i^2}{\sigma^2})^{1+\frac{\alpha}{2}}\int_0^1(1-t_i+\frac{\sigma_i^2}{\sigma^2}s)^{-(\frac{1}{2}+\frac{\alpha}{2})}ds\times \frac{\hat{\mathbf{E}}[|\xi_i|^{2+\alpha}]}{\sigma_i^{2+\alpha}}\\
&=&2c_{\alpha,G_\beta}\frac{\sigma_i^\alpha}{\sigma^\alpha}\int_{1-t_i}^{1-t_{i-1}}s^{-(\frac{1}{2}+\frac{\alpha}{2})}ds\times \frac{\hat{\mathbf{E}}[|\xi_i|^{2+\alpha}]}{\sigma_i^{2+\alpha}}.
\end {eqnarray*}

Hence,
\begin {eqnarray*}
\bigg|\hat{\mathbf{E}}[\varphi(W_n)]-\mathcal{N}_\beta[\varphi]\bigg|&\leq&\sum_{i=1}^n\sup_{x\in\mathbb{R}}\big|b_{i,n}(x)-c_{i,n}(x)\big|\\
&\le&2c_{\alpha,G_\beta}\int_{0}^{1}s^{-(\frac{1}{2}+\frac{\alpha}{2})}ds\sup_{1\le i\le n}\bigg\{\frac{\hat{\mathbf{E}}[|\xi_i|^{2+\alpha}]}{\sigma_i^{2+\alpha}}(\frac{\sigma_i}{\sigma})^\alpha\bigg\}\\
&=&\frac{4c_{\alpha,G_\beta} }{1-\alpha}\sup_{1\le i\le n}\bigg\{\frac{\hat{\mathbf{E}}[|\xi_i|^{2+\alpha}]}{\sigma_i^{2+\alpha}}(\frac{\sigma_i}{\sigma})^\alpha\bigg\}.
\end {eqnarray*}
\end {proof}


\renewcommand{\refname}{\large References}{\normalsize \ }

\end{document}